\documentclass[12pt,english,a4paper]{smfart}

\usepackage[T1]{fontenc}
\usepackage{lmodern}
\usepackage{smfthm}
\usepackage[headings]{fullpage}

\newcommand{\Qp}{\mathbf{Q}_p}
\newcommand{\Zp}{\mathbf{Z}_p}
\newcommand{\ZZ}{\mathbf{Z}}
\renewcommand{\phi}{\varphi}
\renewcommand{\projlim}{\varprojlim}
\newcommand{\Gal}{\mathrm{Gal}}
\newcommand{\Iw}{\mathrm{Iw}}
\newcommand{\Fil}{\mathrm{Fil}}
\newcommand{\Nm}{\mathrm{N}}
\newcommand{\alg}{\mathrm{alg}}
\newcommand{\cyc}{\mathrm{cyc}}
\newcommand{\dfont}{\mathrm{D}}
\newcommand{\pr}{\operatorname{pr}}

\author{Laurent Berger}
\address{UMPA ENS de Lyon \\
UMR 5669 du CNRS}
\email{laurent.berger@ens-lyon.fr}
\urladdr{perso.ens-lyon.fr/laurent.berger/}

\title{Universal norms and de Rham representations}

\begin{document}

\begin{abstract}
The purpose of this note is to show that the computation of the universal norms in Iwasawa cohomology for the $p$-adic cyclotomic extension implies the same result for any $p$-adic extension that contains the cyclotomic extension. In addition, we prove a twisted cyclotomic variant, which applies to Lubin--Tate extensions.
\end{abstract}

\date{\today}

\thanks{The author's research was partially supported by the ANR grant ANR-25-CE40-4664}

\maketitle

\setlength{\baselineskip}{17pt}

\section{Main result}

Let $K$ be a finite extension of $\Qp$ (or, more generally, a finite extension of $W(k)[1/p]$, where $k$ is a perfect field of characteristic $p$). Let $G_K = \Gal(K^{\alg}/K)$ denote the absolute Galois group of $K$ and let $V$ be a $p$-adic representation of $G_K$. Let $K_\infty \subset K^{\alg}$ be an infinite extension of $K$. We consider the Iwasawa cohomology group $H^1_{\Iw}(K_\infty/K,V) = \Qp \otimes_{\Zp} \projlim_L H^1(L,T)$ where $T$ is a lattice of $V$, $L$ runs through the set $E(K_\infty/K)$ of finite subextensions of $K_\infty/K$, and the transition maps are the corestriction maps. If $V$ is a de Rham representation, we consider the submodule $H^1_{\Iw,g}(K_\infty/K,V)$ of $H^1_{\Iw}(K_\infty/K,V)$ consisting of those classes whose image in $H^1(L,V)$ is in the subspace $H^1_g(L,V)$ of $H^1(L,V)$ parametrizing de Rham extensions, for every $L \in E(K_\infty/K)$. 

Let $\Fil^1 V$ be the largest subrepresentation of $V$ whose Hodge--Tate weights are all $\geq 1$. The space $H^1_{\Iw}(K_\infty/K,\Fil^1 V)$ is then contained in $H^1_{\Iw,g}(K_\infty/K,V)$. When $K_\infty = K_{\cyc} := K( \mu_{p^\infty} )$ is the cyclotomic extension of $K$, the main result of \cite{Ber05} is that this inclusion is almost an equality. More precisely, Theorem A of ibid is the following 
(where $H_K = \Gal(K^{\alg}/K_{\cyc})$).

\begin{theo}
\label{theoA}
Let $V$ be a de Rham representation of $G_K$ and let $K_{\cyc}/K$ be the cyclotomic extension.
\begin{enumerate}
\item If $V$ has no subquotient fixed by $H_K$, then $H^1_{\Iw,g}(K_{\cyc}/K,V) = H^1_{\Iw}(K_{\cyc}/K,\Fil^1 V) $
\item In general, $H^1_{\Iw}(K_{\cyc}/K,\Fil^1 V) \subset H^1_{\Iw,g}(K_{\cyc}/K,V)$ and the quotient is a finite-dimensional $\Qp$-vector space.
\end{enumerate}
\end{theo}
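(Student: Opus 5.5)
The plan is to use $(\phi,\Gamma)$-modules over the Robba ring, as in \cite{Ber05}. Fontaine's theorem (refined by Cherbonnier--Colmez) identifies $H^1_{\Iw}(K_{\cyc}/K,V)$ with $\dfont(V)^{\psi=1}$, and in particular with $\dfont^\dagger_{\mathrm{rig}}(V)^{\psi=1}$ up to finite-dimensional error. Since $V$ is de Rham, it is potentially semistable by Berger's theorem. Hence the $(\phi,\Gamma)$-module $\dfont^\dagger_{\mathrm{rig}}(V)$ becomes, after inverting $t$ and making a finite base change, a module of the form $\mathcal{R}[1/t]\otimes \dfont_{\mathrm{pst}}(V)$. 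The filtration is encoded by a modification at the points $\zeta_{p^n}-1$: localizing at $\zeta_{p^n}-1$ recovers $K_n((t))\otimes_K \dfont_{\mathrm{dR}}(V)$ together with its filtration. This gives a description of $\dfont_{\mathrm{dR}}$ via the maps $\iota_n : y \mapsto \phi^{-n}(y)$ into $K_n((t))\otimes V$.

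\textbf{Step 1.} Describe the image of $z \in \dfont^\dagger_{\mathrm{rig}}(V)^{\psi=1}$ in $H^1(K_n,V)$. The Bloch--Kato dual exponential and the Perrin-Riou / Cherbonnier--Colmez formulas express the composite
\[ H^1(K_n,V) \to H^1(K_n,V)/H^1_g \hookrightarrow \dfont_{\mathrm{dR}}(V^*(1))^\vee \]
as the constant term of $\iota_n(z)$, taken modulo $\Fil^0 \dfont_{\mathrm{dR}}(V)$. Concretely, $\exp^*$ of the class in $H^1(K_n,V^*(1))$ is given by $p^{-n}\,\partial^0(\iota_n(z))$. Since $H^1_g(K_n,V)$ is the orthogonal of $H^1_e(K_n,V^*(1))$, the condition $z\in H^1_{\Iw,g}$ becomes: for all $n \gg 0$, $\iota_n(z)$ lies in $\Fil^0(K_n((t))\otimes \dfont_{\mathrm{dR}}(V))+$(a piece coming from $\dfont_{\mathrm{cris}}$ and $\phi=1$). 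Modulo the finite-dimensional $\phi$-contribution, the condition for all $n$ says that $z$ lies in the submodule $\mathcal{M}\subset \dfont^\dagger_{\mathrm{rig}}(V)[1/t]$ cut out by the local conditions $\iota_n(\mathcal{M})\subset \Fil^0$ at every $n$.

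\textbf{Step 2.} Identify $\mathcal{M}^{\psi=1}$. By the Berger--Kisin dictionary between $\phi$-modules with filtration and modifications of $\mathcal{R}$-modules, $\mathcal{M}$ is itself a $(\phi,\Gamma)$-module, and it is the one attached to a sub-object of $V$. Since the constraint is saturated at all $\zeta_{p^n}-1$, the $\Gamma$-stable sub-object whose $\psi=1$ invariants can be nonzero is the largest subrepresentation with all Hodge--Tate weights $\ge 1$, namely $\Fil^1 V$. Precisely, I would show $\mathcal{M}\cap \dfont^\dagger_{\mathrm{rig}}(V) = \dfont^\dagger_{\mathrm{rig}}(\Fil^1V)$ plus a piece $N$. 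Here $N$ has $\dfont_{\mathrm{dR}}$ concentrated in weight $\le 0$ yet lies in $\Fil^0$, which forces $N\subset t^{\text{large}}$-twisted objects.

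\textbf{Step 3 (main obstacle).} Control $N^{\psi=1}$. Any element of $\psi=1$ that is divisible by high powers of $t$ at every $\zeta_{p^n}-1$ must be divisible by $t^k$ for those $k$. Via $\nabla$ and the operator $\partial$, such elements come from $H^0$-type terms: $\psi=1$ elements of $t^k\mathcal{R}\otimes \dfont$ correspond to $\Gamma$-fixed vectors after twisting. These vanish unless $V$ has a subquotient on which $H_K$ acts trivially, i.e.\ one of the form $\Qp(\eta)$ with $\eta$ of finite order on $\Gamma_K$, up to cyclotomic twist. This is where the hypothesis in (1) enters. Under that hypothesis I expect to show $N^{\psi=1}=0$ and that the $\phi=1$ correction also vanishes, which gives (1). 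In general, a dévissage on a filtration of $V$ isolates the $H_K$-trivial subquotients. Their contributions are bounded by $\dim \dfont_{\mathrm{cris}}(\cdot)^{\phi=1}$ and by $H^0$ terms, all of which are finite-dimensional, and this gives (2). The delicate point is making the divisibility-by-$t$ argument rigorous across infinitely many $n$ simultaneously. I would try the growth estimate for $\psi=1$ elements in $\dfont^\dagger_{\mathrm{rig}}$ (order of $\log$), which bounds how many powers of $t$ such an element can be divisible by.
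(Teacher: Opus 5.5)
\paragraph{The gap.} Your outline starts from the right formula (Cherbonnier--Colmez for $\exp^*$), but the decisive step is only announced (``I expect to show''). The reformulations that are supposed to lead to it are also not correct.

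\emph{Step 1.} For $y\in\dfont^\dagger(V)^{\psi=1}$ and $n\gg0$, the element $\phi^{-n}(y)$ lies in $\mathbf{B}^+_{\mathrm{dR}}\otimes_{\Qp}V$. It is therefore \emph{automatically} in $\Fil^0(K_n((t))\otimes_K\dfont_{\mathrm{dR}}(V))$. In fact the conditions $\iota_n(\mathcal{M})\subset\Fil^0$ for all $n$ cut out $\dfont^\dagger_{\mathrm{rig}}(V)$ itself (this is how $\dfont^\dagger_{\mathrm{rig}}(V)$ is rebuilt from $\dfont_{\mathrm{pst}}(V)$), so they say nothing about $H^1_g$. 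The correct condition is the following. We have $\ker\exp^*=H^1_g(K_n,V)$ and $\exp^*(\pr_{K_n}(y))=p^{-n}\partial_V(\phi^{-n}(y))$, for the class in $H^1(K_n,V)$, not in $H^1(K_n,V^*(1))$. So $y\in H^1_{\Iw,g}$ means exactly that these constant terms vanish. There is no $\dfont_{\mathrm{cris}}^{\phi=1}$ correction; that term measures $H^1_f$ versus $H^1_g$.

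\emph{Step 2.} The constant-term condition is not $\mathcal{R}$-linear as soon as $V$ has a negative weight. For $V=\Qp(-1)$ with basis $e_{-1}$, the element $y=e_{-1}$ satisfies it but $ty$ does not. So there is no module $\mathcal{M}$ to feed into Berger's dictionary. When all weights are $\geq0$ the condition does cut out a $(\phi,\Gamma)$-module, namely $\dfont^\dagger_{\mathrm{rig}}(V)\cap t\,\mathbf{N}_{\mathrm{dR}}(V)$. But this is a non-\'etale modification, not attached to any subobject of $V$, and its $\psi=1$ part is infinite-dimensional. Your decomposition $\dfont^\dagger_{\mathrm{rig}}(\Fil^1V)+N$ is therefore the theorem restated, not a step towards it.

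\emph{Step 3.} The heuristic is false. For every $m$, $t^m\mathcal{R}\cap\mathcal{R}^{\psi=1}$ is infinite-dimensional: it contains $t^m\partial^mf=\prod_{j=0}^{m-1}(\nabla-j)f$, with $\partial=(1+X)\,d/dX$, for every $f\in\mathcal{R}^{\psi=1}$ (e.g.\ $f=1/X$). It is also false that $\dfont^\dagger_{\mathrm{rig}}(V)^{\psi=1}$ equals $\dfont^\dagger(V)^{\psi=1}$ up to a finite-dimensional space; the former is a module over the distribution algebra of $\Gamma$.

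\paragraph{Tools for the infinitely-many-$n$ issue.} Boundedness is what you should keep, and it replaces your growth estimate. A nonzero element of the field $\mathbf{B}^\dagger_K$ is a unit near the boundary, so $\dfont^\dagger(V)\cap t\,\dfont^\dagger_{\mathrm{rig}}(V)=0$. Conditions at infinitely many $n$ become a single global condition through operators that commute with every $\iota_n$. Polynomials in $\nabla_V$ are such operators: on $K_n((t))\otimes_K\dfont_{\mathrm{dR}}(V)$ they act as the same polynomial in $t\,d/dt$. They kill the $t^{-1},\dots,t^{-k}$ terms for all $n$ at once, turning the constant-term condition into divisibility by $t$ in $\mathbf{N}_{\mathrm{dR}}(V)$.

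\paragraph{The missing key statement and d\'evissage.} More importantly, you lack the statement that carries the proof and the d\'evissage that makes $\Fil^1V$ appear. The paper quotes Theorem~\ref{theoA} from \cite{Ber05}, where it is proved by Proposition~\ref{prop1} (Prop.~II.3.1 of \cite{Ber05}) followed by the d\'evissage written out in Section~\ref{secproof} with $K_\infty=K_{\cyc}$. Proposition~\ref{prop1} says: if $W$ is de Rham with weights $\geq -k$ and has no nonzero subrepresentation all of whose weights are $\geq 1-k$, then $H^1_{\Iw,g}(K_{\cyc}/K,W)\subset W^{H_K}$.

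The d\'evissage runs as follows. Since $\Fil^j(V/\Fil^jV)=0$, each graded piece $\Fil^{-k}V/\Fil^{1-k}V$ satisfies this hypothesis. One then uses the left exact sequences $0\to H^1_{\Iw,g}(\Fil^{1-k}V)\to H^1_{\Iw,g}(\Fil^{-k}V)\to H^1_{\Iw,g}(\Fil^{-k}V/\Fil^{1-k}V)$ together with $H^1_{\Iw,g}(\Fil^1V)=H^1_{\Iw}(\Fil^1V)$. This bounds $\dim\bigl(H^1_{\Iw,g}(V)/H^1_{\Iw}(\Fil^1V)\bigr)$ by $\sum_{k\geq0}\dim(\Fil^{-k}V/\Fil^{1-k}V)^{H_K}$, which gives (2). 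It gives (1) because these invariants vanish when no subquotient of $V$ is fixed by $H_K$.

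Your d\'evissage ``isolating the $H_K$-trivial subquotients'' cannot replace this, for two reasons. An $H_K$-trivial piece such as $\Qp(1)$ has infinite-dimensional $H^1_{\Iw,g}$. And $\Fil^1$ is not exact: a non-split crystalline extension $V$ of $\Qp(1)$ by an irreducible $U$ with weights $0,2$ has $\Fil^1V=0$. So the filtration must be by the $\Fil^jV$. What your proposal does not supply is precisely the $(\phi,\Gamma)$-module proof of Proposition~\ref{prop1}.
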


This result proved a conjecture of Nekov\'a\v{r} (conjecture A of \cite{PR00}), building on earlier work of Perrin-Riou \cite{PR00} who used her big exponential map. The proof of Theorem \ref{theoA} uses the theory of $(\phi,\Gamma)$-modules. For other extensions $K_\infty/K$, in the absence of a good theory of $(\phi,\Gamma)$-modules, proving an analogue of this result seems difficult. There has however been recent encouraging progress by Ponsinet \cite{Pon24,Pon25}, using the Fargues--Fontaine curve instead of $(\phi,\Gamma)$-modules. 

The purpose of this note is to show that Theorem \ref{theoA}, together with the methods used to prove it, implies the following result.

\begin{theo}
\label{theoB}
If $V$ is a de Rham representation of $G_K$ and if $K_\infty$ \emph{contains} the cyclotomic extension $K_{\cyc}$ of $K$, then $H^1_{\Iw,g}(K_\infty/K,V)  / H^1_{\Iw}(K_\infty/K,\Fil^1 V)$ is a finite-dimensional $\Qp$-vector space.
\end{theo}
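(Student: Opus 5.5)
We reduce to Theorem \ref{theoA} by decomposing the Iwasawa cohomology of $K_\infty/K$ into Iwasawa cohomology over cyclotomic towers of finite subextensions. Write $E'$ for the set of finite subextensions $M$ of $K_\infty/K$. Each $L \in E(K_\infty/K)$ is contained in some $M_{\cyc} = M K_{\cyc}$, and the fields $M_{\cyc}$ are cofinal among subfields of $K_\infty$ containing $K_{\cyc}$. Hence
\[ H^1_{\Iw}(K_\infty/K,V) = \Qp \otimes_{\Zp} \projlim_M \projlim_n H^1(M(\mu_{p^n}),T) = \projlim_M H^1_{\Iw}(M_{\cyc}/M,V), \]
with corestriction as the transition maps between the cyclotomic Iwasawa groups. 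The same holds for $\Fil^1 V$. By definition, a class lies in $H^1_{\Iw,g}$ if and only if each of its components does. Note that $\Fil^1 V$ is the same whether computed over $G_K$ or over $G_M$, because $\Fil^1 V|_{G_M}$ contains the $G_K$-stable sum of its Galois conjugates. So for each $M$ we get, by Theorem \ref{theoA} applied over $M$, an injection
\[ Q_M := H^1_{\Iw,g}(M_{\cyc}/M,V)/H^1_{\Iw}(M_{\cyc}/M,\Fil^1 V) \]
into a finite-dimensional space. The ambient quotient then maps to $\projlim_M Q_M$.

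\textbf{Step 1: bound $\dim Q_M$ uniformly in $M$.} Using the $(\phi,\Gamma)$-module description of $Q_M$ from the proof of Theorem \ref{theoA}, $Q_M$ is controlled by the part of $V$ (or of $\dfont(V)$, e.g.\ of its $H_M$-invariants in a suitable period ring) coming from subquotients fixed by $H_M$. Such subquotients have dimension at most $\dim V$, so $\dim Q_M \le C(\dim V)$, for instance $C = \dim V$ or a fixed multiple of it. This is the main obstacle. I would first revisit the proof of part (2) of Theorem \ref{theoA} and extract an explicit bound depending only on $\dim_{\Qp} V$. If that fails, I would instead show that the corestriction maps $Q_{M'} \to Q_M$ have images stabilizing; that requires knowing that $Q_M$ is described functorially by subquotients on which $G_M$ acts through an abelian quotient.

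\textbf{Step 2: show exactness, i.e.\ that the kernel of the map to $\projlim_M Q_M$ is exactly $H^1_{\Iw}(K_\infty/K,\Fil^1 V)$.} Write $A_M = H^1_{\Iw}(M_{\cyc}/M,\Fil^1 V)$ and $B_M = H^1_{\Iw,g}(M_{\cyc}/M,V)$. The sequence $0 \to \projlim A_M \to \projlim B_M \to \projlim Q_M$ is left exact, and the map $\projlim A_M \to \projlim B_M$ is injective. Hence
\[ H^1_{\Iw,g}(K_\infty/K,V)/H^1_{\Iw}(K_\infty/K,\Fil^1 V) \hookrightarrow \projlim_M Q_M. \]
One technical point: the $\Qp\otimes$ and the $\projlim$ over $M$ must be handled at the lattice level. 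The $\Zp$-modules $\projlim_n H^1(M(\mu_{p^n}),T)$ are finitely generated over $\Zp[[\Gamma]]$, so one can work with lattices throughout and tensor with $\Qp$ only at the end, using compactness to commute limits.

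\textbf{Step 3: conclude.} An inverse limit of $\Qp$-vector spaces of dimension at most $C$ has dimension at most $C$: the images of the projections stabilize (they are bounded in dimension), and the Mittag-Leffler property then gives that the limit injects into a single stable image. This proves Theorem \ref{theoB}.
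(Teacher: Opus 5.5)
Your overall architecture is sound. The fields $M(\mu_{p^n})$ are cofinal in $E(K_\infty/K)$ because $K_{\cyc}\subset K_\infty$, left exactness of $\projlim$ gives the injection of the quotient into $\projlim_M Q_M$, and your Step 3 is a valid argument for Lemma~\ref{lemm1}. One correction: $\Qp\otimes_{\Zp}-$ does not commute with $\projlim_M$. You only get an injection $H^1_{\Iw}(K_\infty/K,V)\hookrightarrow\projlim_M H^1_{\Iw}(M_{\cyc}/M,V)$, coming from $\Zp$-torsion-freeness of $H^1_{\Iw}(M_{\cyc}/M,T)$. That injection, plus your lattice-and-compactness argument for the kernel, is all Step 2 needs.

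The genuine gap is Step 1. It carries the substantive content of the theorem, and you support it only by a heuristic. Theorem~\ref{theoA}(2) as a black box gives finiteness of each $Q_M$, but no bound independent of $M$. Your fallback cannot replace such a bound: for a directed system of finite-dimensional spaces the images always stabilize (Mittag-Leffler is automatic), yet $\projlim_n \Qp^n$ along the coordinate projections is infinite-dimensional. Nor does ``subquotients fixed by $H_M$ have dimension $\le \dim V$'' give a bound, unless you know which subquotients occur and that their dimensions add up to at most $\dim V$.

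The missing ingredient is Proposition~\ref{prop1} (Prop.~II.3.1 of \cite{Ber05}) combined with a d\'evissage along $\Fil^{-k}V$. The proposition says: if $W$ is de Rham with all Hodge--Tate weights $\ge -k$ and no subrepresentation with all weights $\ge 1-k$, then $H^1_{\Iw,g}(M_{\cyc}/M,W)\subset W^{H_M}$. The graded pieces $\Fil^{-k}V/\Fil^{1-k}V$ satisfy this hypothesis because $\Fil^j(V/\Fil^jV)=0$ (Lemma II.3.3 of \cite{Ber05}). Your conjugation argument shows that $\Fil^jV$ is the same over $M$ as over $K$ for every $j$, so the filtration and the hypothesis on its graded pieces do not depend on $M$. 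The exact sequences $0\to H^1_{\Iw,g}(\Fil^{1-k}V)\to H^1_{\Iw,g}(\Fil^{-k}V)\to H^1_{\Iw,g}(\Fil^{-k}V/\Fil^{1-k}V)$ over $M_{\cyc}/M$ then give $\dim Q_M\le\sum_{k\ge0}\dim(\Fil^{-k}V/\Fil^{1-k}V)\le\dim V$. With that bound, your Steps 2 and 3 finish the proof.

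The paper runs the same d\'evissage directly over $K_\infty$. It applies Lemma~\ref{lemm1} only to the graded pieces, where $H^1_{\Iw,g}$ itself, not a quotient, is bounded; this ordering makes your Step 2 unnecessary.
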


The idea behind this theorem is quite simple. Recall that $E(K_\infty/K)$ denotes the set of all finite subextensions $L$ of $K_\infty / K$. First suppose, for simplicity, that $V$ is such that for all $L \in E(K_\infty/K)$, we have $H^1_{\Iw,g}(L_{\cyc}/L,V) = H^1_{\Iw}(L_{\cyc}/L,\Fil^1 V)$ (namely that for all $L$, we are in case (1) of Theorem \ref{theoA}). Take $c \in H^1_{\Iw,g}(K_\infty/K,V)$ and for $n \geq 0$, let $L_n = L(\mu_{p^n})$ so that $L_{\cyc} = \cup_{n \geq 0} L_n$. For every such $L$ and every $n \geq 0$, we have $\pr_{L_n}(c) \in H^1_g(L_n,V)$. Thus, the compatible collection of classes $\{ \pr_{L_n}(c) \}_{n \geq 0}$ defines an element of $H^1_{\Iw,g}(L_{\cyc}/L,V)$. Since we are assuming that $H^1_{\Iw,g}(L_{\cyc}/L,V) = H^1_{\Iw}(L_{\cyc}/L,\Fil^1 V)$, we have $\pr_L(c) \in H^1(L, \Fil^1 V)$. Since this holds for all $L \in E(K_\infty/K)$, we have $c \in H^1_{\Iw}(K_\infty/K,\Fil^1 V)$.

\section{Proof of the theorem}
\label{secproof}

The only additional complication that arises when proving Theorem \ref{theoB} in full generality is to take care of the possible finite-dimensional quotient in case (2) of Theorem \ref{theoA}. The main result that we need in order to prove Theorem \ref{theoB} is the following (Prop II.3.1 of \cite{Ber05}; recall that $H^1_{\Iw}(K_\cyc/K,V) = \dfont^\dagger(V)^{\psi=1}$ in the notation of ibid).

\begin{prop}
\label{prop1}
If $k \geq 0$ and if $V$ is a de Rham representation of $G_K$ whose Hodge--Tate weights are all $\geq -k$, and which has no subrepresentation all of whose Hodge--Tate weights are $\geq 1-k$, then $H^1_{\Iw,g}(K_{\cyc}/K,V) \subset V^{H_K}$.
\end{prop}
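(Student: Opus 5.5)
Identify $H^1_{\Iw}(K_{\cyc}/K,V)$ with $\dfont^\dagger(V)^{\psi=1}$, where $V^{H_K}$ sits inside $\dfont(V)^{\psi=1}$ as the $\phi$-invariant, $\Gamma$-finite part. The plan is to show that $\dfont^\dagger(V)^{\psi=1}$ has no room for de Rham classes other than those coming from $V^{H_K}$, by passing through the de Rham module over $\mathbf{B}^+_{\mathrm{dR}}$. I would use the Cherbonnier--Colmez description of $\pr_{K_n}(y)$ for $y\in\dfont^\dagger(V)^{\psi=1}$. For $n\gg 0$, the image of $\pr_{K_n}(y)$ in $H^1(K_n, V\otimes \mathbf{B}_{\mathrm{dR}})$, or more precisely in $H^1(K_n,V\otimes\mathbf{B}^+_{\mathrm{dR}})$ modulo the part killed by $\mathrm{Fil}$, is computed by $\iota_n(y)=\phi^{-n}(y)\in \mathbf{D}_{\mathrm{dif},n}^+(V)=K_n((t))\otimes\dots$. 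Up to a $\Gamma$-cocycle term, the class is de Rham if and only if the image of $\phi^{-n}(y)$ in $\mathbf{D}_{\mathrm{dif}}(V)/\mathbf{D}^+_{\mathrm{dif}}$-type quotient satisfies a condition coming from $(1-\phi)\dfont^\dagger$ and $\gamma-1$.

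Concretely, the steps are as follows.

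\textbf{Step 1.} Using Hodge--Tate weights $\geq -k$, twist by $t^k$ so that $V(-k)$ has weights $\geq 0$ and is still de Rham. Then $\dfont^\dagger_{\mathrm{rig}}(V)$ sits inside $t^{-k}\mathbf{B}^\dagger_{\mathrm{rig}}\otimes \dfont_{\mathrm{dR}}$-type lattices. Via Berger's $\mathbf{N}_{\mathrm{dR}}$, we have $\dfont^\dagger_{\mathrm{rig}}(V)\subset t^{-k}\mathbf{N}_{\mathrm{dR}}(V)$ with $\mathbf{N}_{\mathrm{dR}}(V)$ a $(\phi,\Gamma)$-module over $\mathbf{B}^\dagger_{\mathrm{rig},K}$ with $\Gamma$ acting trivially mod $t$ (after a finite extension).

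\textbf{Step 2.} Translate ``$\pr_{K_n}(y)\in H^1_g$ for all $n$'' into ``$y$ lies in $t^{-k}\mathbf{N}_{\mathrm{dR}}(V)$ and its image modulo $t^{1-k}\mathbf{N}_{\mathrm{dR}}$ is killed by $\nabla$-type conditions''. Equivalently, $y$ belongs to a sub-$(\phi,\Gamma)$-module $M\subset\dfont^\dagger_{\mathrm{rig}}(V)$ whose de Rham filtration is concentrated in weights where the class is de Rham. This uses the Bloch--Kato exponential computation in terms of $\phi^{-n}$, namely that $H^1_g$ corresponds to elements whose $\iota_n$ image lies in $\mathbf{D}^+_{\mathrm{dif}}$ after correction by $(1-\phi)$.

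\textbf{Step 3.} Consider the saturated $(\phi,\Gamma)$-submodule generated by $y$ and its $\Gamma$-translates. The hypothesis that no subrepresentation has all weights $\geq 1-k$ should force this submodule to have a piece of weight $-k$, on which $y$ must then be $\psi=1$ and $\Gamma$-finite. The step I expect to be the main obstacle is the resulting conclusion: showing that $y$ is then killed by $\nabla$, hence $\phi$-invariant rather than merely $\psi$-invariant, hence in $(\mathbf{B}^\dagger_{\mathrm{rig}}\otimes V)^{H_K,\phi=1}=V^{H_K}$. I would try to run the argument by induction on $k$: for $k=0$, de Rham classes in $\dfont^{\psi=1}$ with weights $\geq 0$ and no weight-$\geq1$ subrepresentation are $\phi$-invariant by the $k=0$ computation of Ber05 (the $\phi$-invariant part $\mathbf{N}_{\mathrm{dR}}^{\phi=1}$). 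For general $k$, apply $\partial=\nabla/t$-type operators, which lower $k$ by one. The delicate point is controlling subrepresentations under this twisting, and handling the case $t\mid y$, which reduces the weight.
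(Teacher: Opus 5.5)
The paper does not prove this proposition; it imports it as \cite[Prop.~II.3.1]{Ber05}. As a standalone argument, your outline has a genuine gap at the step that carries all the content. Step~3 only asserts that the weight hypothesis ``should force'' $y$ to be $\Gamma$-finite, and the way you propose to conclude aims at a false target.

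Elements of $H^1_{\Iw,g}(K_{\cyc}/K,V)$ need not be killed by $\nabla$. Take $V=\Qp(-1)$ and $k=1$. The vector $e_{-1}\in V^{H_K}\subset\dfont(V)^{\psi=1}$ maps $\Gamma$-equivariantly to $H^1(K_n,V)$, on which $\Gamma_n$ acts trivially. Its image is therefore killed by $\chi(\gamma_n)^{-1}-1$, hence is $0$. So $e_{-1}\in H^1_{\Iw,g}(K_{\cyc}/K,V)$, yet $\nabla e_{-1}=-e_{-1}$. The right endpoint is $\Gamma$-finiteness of $y$, not $\nabla y=0$.

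The induction on $k$ also breaks down:
\begin{enumerate}
\item $\partial=t^{-1}\nabla$ is defined on $\mathbf{N}_{\mathrm{dR}}(V)$, not on $\dfont^\dagger(V)$.
\item $\psi\circ\partial=p\,\partial\circ\psi$, so $\partial$ turns $\psi=1$ into $\psi=p$. Already for $y=fe_{-k}\in\dfont^\dagger(\Qp(-k))^{\psi=1}$ one gets $(\nabla+k)y=t\,\partial(f)\,e_{-k}$ with $\psi(\partial f)=p\,\partial f$. This is no longer the Iwasawa cohomology of a representation.
\item Your base case $k=0$ is quoted from \cite{Ber05}, i.e.\ from the statement being proved.
\end{enumerate}

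Step~2 is also not the right translation, and it hides where boundedness must enter. Map the Cherbonnier--Colmez cocycle into $H^1(K_n,\mathbf{B}_{\mathrm{dR}}\otimes V)\simeq K_n\otimes_K\dfont_{\mathrm{dR}}(V)$. This shows that, for $n\gg0$, $\pr_{K_n}(y)\in H^1_g(K_n,V)$ if and only if the \emph{constant} coefficient of $\phi^{-n}(y)\in K_n((t))\otimes_K\dfont_{\mathrm{dR}}(V)$ vanishes. Writing $y=t^{-k}z$ with $z\in\mathbf{N}_{\mathrm{dR}}(V)$, the condition reads $\partial^k z=\prod_{j=1}^k(\nabla+j)\,y\in t\,\mathbf{N}_{\mathrm{dR}}(V)$.
\begin{itemize}
\item This is not a condition on $y$ modulo $t^{1-k}\mathbf{N}_{\mathrm{dR}}(V)$.
\item For $k\geq1$ it does not cut out a submodule. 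For $V=\Qp(-1)$ and $y=fe_{-1}$ it reads $\partial f\in t\,\mathbf{B}^\dagger_{\mathrm{rig},K}$, which holds for $f=1$ but is not stable under multiplication.
\item It cannot suffice on its own. For $K=\Qp$ and $V=\Qp$, the element $\nabla(1/\pi)=-t(1+\pi)/\pi^2$ is $\psi$-invariant and lies in $t\,\mathbf{N}_{\mathrm{dR}}(V)$, but not in $\Qp$.
\end{itemize}
So the proof must use that $y$ lies in the bounded module $\dfont^\dagger(V)$. Your steps are all phrased in $\dfont^\dagger_{\mathrm{rig}}$ and $\mathbf{N}_{\mathrm{dR}}$ and never do this.

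For $k=0$ boundedness is enough:
\begin{itemize}
\item The condition gives $y\in t\,\mathbf{N}_{\mathrm{dR}}(V)$.
\item The $\mathbf{B}^\dagger_K$-span of the $\phi^i\gamma(y)$ is \'etale, hence equals $\dfont^\dagger(W)$ for a subrepresentation $W$.
\item Then $\dfont^\dagger_{\mathrm{rig}}(W)\subset t\,\mathbf{N}_{\mathrm{dR}}(W)$ forces all Hodge--Tate weights of $W$ to be $\geq1$, so $W=0$ and $y=0$.
\end{itemize}
For $k\geq1$ the condition says nothing directly about the coefficients of $t^{-k},\dots,t^{-1}$. That is exactly where elements like $e_{-k}\in\Qp(-k)^{H_K}$ live. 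Controlling those coefficients using boundedness and $\psi=1$ is the missing idea, and nothing in your outline supplies it.
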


\begin{coro}
\label{coro1}
If $k \geq 0$ and if $V$ is a de Rham representation of $G_K$ whose Hodge--Tate weights are all $\geq -k$, and which has no subrepresentation all of whose Hodge--Tate weights are $\geq 1-k$, then $H^1_{\Iw,g}(K_\infty/K,V)$ is a finite-dimensional $\Qp$-vector space of dimension $\leq \dim(V)$.
\end{coro}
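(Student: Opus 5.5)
The plan is to reduce to Proposition \ref{prop1} applied over each finite subextension $L \in E(K_\infty/K)$, and then to control the inverse limit by a single finite-dimensional space. First we note that the hypotheses are stable under restriction to $G_L$ for any finite extension $L/K$. Hodge--Tate weights do not change under restriction. The condition ``no subrepresentation all of whose Hodge--Tate weights are $\geq 1-k$'' also passes to $G_L$: if $W \subset V|_{G_L}$ were such a subrepresentation, then the sum of its $G_K$-conjugates $\sum_{g \in G_K/G_L} gW$ would be a $G_K$-stable subspace. It is a quotient of $\oplus_g gW$, and each $gW$ has the same Hodge--Tate weights as $W$, so all its weights are $\geq 1-k$. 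This contradicts the hypothesis over $K$. Hence Proposition \ref{prop1} gives $H^1_{\Iw,g}(L_{\cyc}/L,V) \subset V^{H_L}$ for every $L \in E(K_\infty/K)$.

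Next, take $c \in H^1_{\Iw,g}(K_\infty/K,V)$. As in the sketch following Theorem \ref{theoB}, for each $L$ the classes $\{\pr_{L_n}(c)\}_{n}$, with $L_n = L(\mu_{p^n}) \in E(K_\infty/K)$ since $K_\infty \supset K_{\cyc}$, define an element $c_L \in H^1_{\Iw,g}(L_{\cyc}/L,V) \subset V^{H_L} \subset V$. Here we use $H^1_{\Iw}(L_\cyc/L,V) = \dfont^\dagger(V)^{\psi=1}$, which contains $V^{H_L}$ compatibly. We then get a map
\[
\iota : H^1_{\Iw,g}(K_\infty/K,V) \to \prod_{L} V, \qquad c \mapsto (c_L)_L .
\]
This map is injective, because $c$ is determined by its projections $\pr_L(c)$, and $\pr_L(c)$ is the bottom layer of $c_L$.

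The key step, and the main obstacle, is to understand the transition maps. For $L \subset L'$, the element $c_L$ should be obtained from $c_{L'}$ by corestriction from $L'_\cyc$ to $L_\cyc$. On elements of $V^{H_{L'}}$ this corestriction is the trace/norm map $\sum_{g \in H_L/H_{L'}} g$; one must check this in the $\dfont^\dagger(V)^{\psi=1}$ description. Hence $(c_L)$ lies in $\projlim_L V^{H_L}$ with trace transition maps, where $V^{H_L}$ is a subspace of $V$. The subspaces $V^{H_L}$ increase with $L$ inside the finite-dimensional space $V$, so they stabilize to some $W = V^{H_{L_0}}$ for $L \supset L_0$. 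For $L_0 \subset L \subset L'$, the transition map $W \to W$ is multiplication by $[L'_\cyc : L_\cyc]$ (when this degree is finite). It is therefore an isomorphism after inverting $p$, since we work with $\Qp$-coefficients. Consequently the inverse limit injects into $W$ via projection to the $L_0$ component. It follows that $H^1_{\Iw,g}(K_\infty/K,V)$ injects into $W \subset V$, giving dimension $\leq \dim V$.

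If the compatibility of corestriction with the identification is delicate, we would instead argue more crudely. The map $c \mapsto c_{L_0}$ is injective on the limit: an element with $c_{L_0}=0$ has $c_L \in W$ mapping to $0$ under multiplication by a nonzero integer, so $c_L = 0$ for all $L$ and hence $c = 0$.
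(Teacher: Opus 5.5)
Your reduction matches the paper's. You check that the hypotheses survive restriction to $G_L$. You embed $H^1_{\Iw,g}(K_\infty/K,V)$ into $\projlim_L H^1_{\Iw,g}(L_\cyc/L,V)$, using $L_n\in E(K_\infty/K)$. You then apply Proposition \ref{prop1} over each $L$.

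The divergence is in the last step. The paper never looks at the transition maps. Lemma \ref{lemm1} shows that a directed inverse limit of $\Qp$-vector spaces of dimension $\leq d$ has dimension $\leq d$: the spaces of linear relations among $d+1$ given elements form a directed family of nonzero subspaces of $\Qp^{d+1}$, and their intersection is reached at a finite stage.

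You instead identify the transition maps on $V^{H_{L'}}$ with the trace $\sum_{g\in H_L/H_{L'}} g$, so that on the stable space $W=V^{H_{L_0}}$ they become multiplication by $[L'_\cyc:L_\cyc]$. This is true but not formal. It needs the isomorphism $H^1_{\Iw}(L_\cyc/L,V)\cong\dfont^\dagger(V)^{\psi=1}$, with its explicit normalisation, to be compatible with restriction and corestriction as $L$ varies, so that the copies of $V^{H_L}$ for different $L$ match up. You flag this and leave it unproved. Your ``cruder'' fallback does not avoid it, because it again uses that the transition map on $W$ is multiplication by a nonzero integer.

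Replacing your third paragraph by Lemma \ref{lemm1} closes the gap with no input about transition maps. Your sharper conclusion, that $c\mapsto c_{L_0}$ is injective for a suitable $L_0$, then also comes for free. Once the limit is finite-dimensional, the kernels of its projections form a directed family of subspaces with zero intersection, so one of them is already zero.

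A smaller point concerns your justification that $\iota$ is injective, namely that $c$ is determined by the $\pr_L(c)$. If $\pr_L(c)$ means the image in $H^1(L,V)$, as in the paper, this is false, and it fails exactly for the classes at stake here. Take $V=\Qp(-1)$ (allowed with $k=1$) and $K_\infty=K_\cyc$. The line $V^{H_K}\subset\dfont^\dagger(V)^{\psi=1}$ consists of nonzero classes whose images in every $H^1(K_n,V)$ vanish: at level $n$ the class is represented by $\sigma\mapsto\frac{\sigma-1}{\gamma_n-1}y$, which is a coboundary because $\gamma_n-1$ is invertible on $\Qp(-1)$.

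The map $\iota$ is nevertheless injective, for a different reason. Each $c_L$ is the whole Iwasawa class over $L_\cyc$, not just its bottom layer, and $\projlim_n H^1(L_n,T)$ is $\Zp$-torsion-free. Hence $\Qp\otimes\projlim_L H^1(L,T)\to\projlim_L H^1_{\Iw}(L_\cyc/L,V)$ is injective, which is the inclusion the paper uses.
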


\begin{proof}
We have $H^1_{\Iw,g}(K_\infty/K,V) \subset \projlim_{L \in E(K_\infty/K)} H^1_{\Iw,g}(L_{\cyc}/L,V)$ and each of the $H^1_{\Iw,g}(L_{\cyc}/L,V)$ is a finite-dimensional $\Qp$-vector space of dimension $\leq \dim(V)$ by Proposition \ref{prop1}. 
This implies the result by Lemma \ref{lemm1} below.
\end{proof}

\begin{lemm}
\label{lemm1}
If $d \geq 1$ and $\{ V_i \}_{i \in I }$ is a directed inverse system 
of $\Qp$-vector spaces with $\dim V_i \leq d$ for all $i \in I$, then $\dim \projlim_{i \in I} V_i \leq d$.
\end{lemm}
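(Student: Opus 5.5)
Let $\pi_i \colon \projlim_{j \in I} V_j \to V_i$ denote the natural projections, and write $W = \projlim_{j \in I} V_j$. The plan is to argue by contradiction using finite families of vectors. Suppose that $\dim W > d$. Then there are $d+1$ linearly independent elements $w_0, \dots, w_d \in W$. For each nonzero vector $\lambda = (\lambda_0,\dots,\lambda_d) \in \Qp^{d+1}$, the element $\sum \lambda_k w_k$ is nonzero in $W$. A nonzero element of an inverse limit has a nonzero component, so there is some index $i_\lambda \in I$ with $\pi_{i_\lambda}(\sum \lambda_k w_k) \neq 0$. Moreover, if $j \geq i_\lambda$, then the component at $j$ maps to the component at $i_\lambda$ under the transition map, so $\pi_j(\sum \lambda_k w_k) \neq 0$ as well. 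Nonvanishing is therefore inherited upwards in the directed set.

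The key step is to find a single index $i$ at which $\pi_i(w_0), \dots, \pi_i(w_d)$ are linearly independent in $V_i$, since this contradicts $\dim V_i \leq d$. For each $i$, let $K_i \subset \Qp^{d+1}$ be the kernel of the map $\lambda \mapsto \pi_i(\sum \lambda_k w_k)$. By the upward-inheritance remark, $K_j \subset K_i$ whenever $j \geq i$. Each $K_i$ is a subspace of the finite-dimensional space $\Qp^{d+1}$, so we may choose $i_0$ for which $\dim K_{i_0}$ is minimal.

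We then show that $K_{i_0} = 0$. Take any $\lambda \in K_{i_0}$ and choose $j \geq i_0, i_\lambda$ using directedness. Then $K_j \subset K_{i_0}$, and by minimality of the dimension $K_j = K_{i_0}$. On the other hand $\lambda \notin K_j$, because $j \geq i_\lambda$. This forces $\lambda = 0$. Hence $\pi_{i_0}(w_0), \dots, \pi_{i_0}(w_d)$ are linearly independent in $V_{i_0}$, which contradicts $\dim V_{i_0} \leq d$.

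The only real subtlety is the passage from nonvanishing of each individual combination to nonvanishing of all combinations at a single index. The uncountability of $\Qp^{d+1}$ prevents simply taking an upper bound of all the indices $i_\lambda$. The stabilization of the decreasing subspaces $K_i$, through a minimal-dimension argument, handles this. It uses both that $I$ is directed and that $\Qp^{d+1}$ is finite-dimensional, and no topology on the $V_i$ is needed.
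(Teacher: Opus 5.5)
Your proof is correct and is essentially the paper's argument: your kernels $K_i$ are the paper's relation spaces $R_i \subset \Qp^{d+1}$, and your minimal-dimension choice of $i_0$ makes explicit the finite-dimensionality step the paper uses to pass from finite intersections of the $R_i$ being nonzero (by directedness) to $\bigcap_i R_i \neq 0$; you simply run it contrapositively. One cosmetic point: in the final step you should take $\lambda \in K_{i_0}$ nonzero, since $i_\lambda$ is only defined for $\lambda \neq 0$.
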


\begin{proof}
Take $c_1, \hdots, c_{d+1} \in \projlim_{i \in I} V_i$. Let $R_i \subset \Qp^{d+1}$ be the set of $(a_1,\hdots,a_{d+1})$ such that $\sum_{n=1}^{d+1} a_n  \cdot \pr_{V_i}(c_n) = 0$ in $V_i$. Each $R_i$ is nonzero since $\dim V_i \leq d$ and by directedness, each finite intersection of $R_i$ is nonzero. Since $\Qp^{d+1}$ is finite-dimensional, the intersection of the $R_i$  is nonzero. This gives a nontrivial linear relation between the $c_n$. 
\end{proof}

Theorem \ref{theoB} now follows from the same d\'evissage arguments as in \cite{Ber05}, which we now give. Let $\Fil^j V$ denote the largest subrepresentation of $V$ whose Hodge--Tate weights are all $\geq j$. Note that if $L/K$ is a finite extension and $V$ is a representation of $G_K$, then the largest $G_L$-subrepresentation $\Fil_L^j V$ of $V$ whose Hodge--Tate weights are all $\geq j$ is stable under $G_K$. Hence it is equal to $\Fil_K^j V$, so we can drop the subscript from the notation. Now take $k \geq 0$. We have an exact sequence
\[ 0 \to H^1_{\Iw,g}(K_\infty/K,\Fil^{1-k} V) \to H^1_{\Iw,g}(K_\infty/K, \Fil^{-k} V) \to H^1_{\Iw,g}(K_\infty/K,\Fil^{-k} V / \Fil^{1-k} V). \]
By Lemma II.3.3 of \cite{Ber05}, we have $\Fil^j(V/\Fil^j V) = \{0\}$ for all $j \in \ZZ$. 
Therefore the quotient $\Fil^{-k} V / \Fil^{1-k} V$ has no subrepresentation all of whose Hodge--Tate weights are $\geq 1-k$. Corollary \ref{coro1} applies, so that $H^1_{\Iw,g}(K_\infty/K,\Fil^{-k} V / \Fil^{1-k} V)$ is a finite-dimensional $\Qp$-vector space, and therefore so is $H^1_{\Iw,g}(K_\infty/K, \Fil^{-k} V) / H^1_{\Iw,g}(K_\infty/K, \Fil^{1-k} V)$. 
Since $V = \Fil^{-k} V$ for $k \gg 0$, it follows by d\'evissage that 
\[ H^1_{\Iw,g}(K_\infty/K,V) / H^1_{\Iw,g}(K_\infty/K, \Fil^1 V) \] 
is a finite-dimensional $\Qp$-vector space. This proves Theorem \ref{theoB}.

\section{The twisted cyclotomic case}

The analogues of Theorem \ref{theoA} and Proposition \ref{prop1} still hold if one replaces the cyclotomic extension of $K$ by a twisted cyclotomic extension, namely (see \S 8 of \cite{Ber16}) if one replaces $K_{\cyc}$ by $K_{\cyc}^{\eta} = (K^{\alg})^{\ker \chi_{\cyc} \cdot \eta}$ with $\eta : G_K \to \Zp^\times$ an unramified character. The corresponding analogue of Theorem \ref{theoB} then also holds.

\begin{theo}
\label{theoC}
If $V$ is a de Rham representation of $G_K$ and if $K_\infty$ contains a twisted cyclotomic extension $K_{\cyc}^\eta$ of $K$, then $H^1_{\Iw,g}(K_\infty/K,V)  / H^1_{\Iw}(K_\infty/K,\Fil^1 V)$ is a finite-dimensional $\Qp$-vector space.
\end{theo}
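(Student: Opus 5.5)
The plan is to copy the argument of \S\ref{secproof} word for word, replacing $K_{\cyc}$ by $K_{\cyc}^\eta$ and Proposition \ref{prop1} by its twisted analogue from \S 8 of \cite{Ber16}. That analogue states: if $k \geq 0$ and $V$ is de Rham with Hodge--Tate weights all $\geq -k$ and no subrepresentation all of whose Hodge--Tate weights are $\geq 1-k$, then $H^1_{\Iw,g}(K_{\cyc}^\eta/K,V) \subset V^{H_K^\eta}$, where $H_K^\eta = \Gal(K^{\alg}/K_{\cyc}^\eta)$. The only new point is the role of the finite subextensions $L \in E(K_\infty/K)$. For each such $L$, the restriction $\eta|_{G_L}$ is still an unramified character, so the compositum $L K_{\cyc}^\eta$ is the twisted cyclotomic extension $L_{\cyc}^{\eta}$ of $L$. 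Moreover, it is contained in $K_\infty$, since both $L$ and $K_{\cyc}^\eta$ are.

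First I prove the analogue of Corollary \ref{coro1}. Take $V$ satisfying the hypotheses on Hodge--Tate weights; these depend only on the restriction to $G_L$, by the stability of $\Fil^j$ noted in \S\ref{secproof}. Let $c \in H^1_{\Iw,g}(K_\infty/K,V)$. For each $L$, write $L_n^\eta$ for the layers of $L_{\cyc}^\eta/L$, which are finite subextensions of $K_\infty/K$. The classes $\pr_{L_n^\eta}(c)$ lie in $H^1_g(L_n^\eta,V)$ and are compatible under corestriction, so they define an element of $H^1_{\Iw,g}(L_{\cyc}^\eta/L,V)$. This gives an injection
\[ H^1_{\Iw,g}(K_\infty/K,V) \hookrightarrow \projlim_{L \in E(K_\infty/K)} H^1_{\Iw,g}(L_{\cyc}^\eta/L,V). \]
It is injective because every $L \in E(K_\infty/K)$ appears as the bottom layer $n=0$. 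By the twisted Proposition applied over $L$, each term has dimension $\leq \dim V$. Lemma \ref{lemm1} then gives $\dim H^1_{\Iw,g}(K_\infty/K,V) \leq \dim V$.

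Second, the d\'evissage runs exactly as before. For $k \geq 0$, the sequence
\[ 0 \to H^1_{\Iw,g}(K_\infty/K,\Fil^{1-k} V) \to H^1_{\Iw,g}(K_\infty/K,\Fil^{-k} V) \to H^1_{\Iw,g}(K_\infty/K,\Fil^{-k}V/\Fil^{1-k}V) \]
is exact, by left exactness of $H^1_{\Iw}$ for the $\projlim$ of $H^1$'s: $H^0$ of the quotient vanishes in the limit after tensoring with $\Qp$, or one argues as in \cite{Ber05}. By Lemma II.3.3 of \cite{Ber05}, the quotient $\Fil^{-k}V/\Fil^{1-k}V$ satisfies the hypotheses of the corollary, so its $H^1_{\Iw,g}$ is finite-dimensional. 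Inducting on $k$ from $\Fil^1 V$ down to $\Fil^{-k} V = V$ shows that $H^1_{\Iw,g}(K_\infty/K,V)/H^1_{\Iw,g}(K_\infty/K,\Fil^1 V)$ is finite-dimensional. Finally, $H^1_{\Iw,g}(K_\infty/K,\Fil^1V) = H^1_{\Iw}(K_\infty/K,\Fil^1 V)$, because all of $H^1(L,\Fil^1 V)$ is de Rham, i.e.\ $H^1_g = H^1$ for Hodge--Tate weights $\geq 1$. This proves Theorem \ref{theoC}.

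The main obstacle is the input: making sure the twisted analogue of Prop II.3.1 of \cite{Ber05} holds over every finite $L$, with the bound $\dim V^{H_L^\eta} \leq \dim V$. In the setting of \S 8 of \cite{Ber16}, $H^1_{\Iw}(L^\eta_{\cyc}/L,V)$ is described by $\psi$-invariants of a twisted $(\phi,\Gamma)$-module. One must check that the argument there, which shows $\dfont^\dagger(V)^{\psi=1}$ of de Rham classes lands in $V^{H}$, carries over. I would invoke it as stated in \cite{Ber16}. I would also verify that base change $K \to L$ keeps $\eta|_{G_L}$ unramified, which is immediate.
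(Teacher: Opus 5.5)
Your skeleton (the analogue of Corollary~\ref{coro1} via Lemma~\ref{lemm1}, then the d\'evissage of \S\ref{secproof}, and $H^1_g=H^1$ on $\Fil^1V$) is the paper's. However, the one ingredient that is genuinely new for Theorem~\ref{theoC}, a twisted Proposition~\ref{prop1}, is exactly the one you leave open. You propose to ``invoke it as stated in \cite{Ber16}'', or else to check that the $(\phi,\Gamma)$-module proof of Prop.~II.3.1 of \cite{Ber05} carries over to a twisted theory; neither is done. The paper does not treat this analogue as available off the shelf: it says the proof of Theorem~\ref{theoB} goes through \emph{if} one has it, and then proves it. As written, your proof of Theorem~\ref{theoC} is therefore conditional on an unproved statement that carries all of its new content.

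The missing idea is that the twist dies on inertia, so no twisted $(\phi,\Gamma)$-modules are needed. For $L\in E(K_\infty/K)$, let $P$ be the completion of the maximal unramified extension of $L$. Then $G_P\cong I_L$ and $\eta|_{I_L}=1$, so $P^\eta_\cyc=P_\cyc$. Restriction to inertia gives a map $H^1_{\Iw}(L^\eta_\cyc/L,V)\to H^1_{\Iw}(P_\cyc/P,V)$ with three properties:
\begin{enumerate}
\item It is injective. After replacing $L$ by the finite maximal unramified subextension of $L^\eta_\cyc/L$, inflation--restriction identifies the kernel with $H^1(G_L/I_L,(\Lambda\otimes T)^{I_L})$, where $\Lambda=\Zp[[\Gal(L^\eta_\cyc/L)]]$. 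These invariants vanish because $I_L$ acts on $\Lambda$ through an infinite subgroup.
\item It sends $H^1_{\Iw,g}$ into $H^1_{\Iw,g}$, since a de Rham extension stays de Rham on $G_P$.
\item The Hodge--Tate hypotheses on $V$ transfer, because $\Fil^jV$ is the same for $G_P$ and $G_L$ ($I_L$ is normal in $G_L$).
\end{enumerate}
Since the setting allows finite extensions of $W(k)[1/p]$ with $k$ perfect, Proposition~\ref{prop1} applies over $P$ and gives $\dim H^1_{\Iw,g}(L^\eta_\cyc/L,V)\le\dim V$. That bound is all Lemma~\ref{lemm1} needs; the finer inclusion into $V^{H^\eta_L}$ that you aimed for is unnecessary.

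A smaller point: the injectivity of $H^1_{\Iw,g}(K_\infty/K,V)\to\projlim_L H^1_{\Iw,g}(L^\eta_\cyc/L,V)$ does not follow from ``every $L$ is a bottom layer''. After tensoring with $\Qp$, a nonzero class can vanish in every $H^1(L,V)$; norm-compatible $p$-power roots of unity in $H^1_{\Iw}(K_\cyc/K,\Qp(1))$ give an example. The correct reason is that the integral inverse limits agree by cofinality, and each $H^1_{\Iw}(L^\eta_\cyc/L,T)$ is $\Zp$-torsion-free.
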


\begin{proof}
The proof of this result is the same as the proof of Theorem \ref{theoB}, if we have an analogue of Proposition \ref{prop1} for $K_{\cyc}^{\eta}$. We now explain why such an analogue holds. If $P$ is the completion of the maximal unramified extension of $K$, then $P_{\cyc} = P^\eta_{\cyc}$. Take $V$ as in the statement of Proposition \ref{prop1}. The restriction-to-inertia map gives rise to an injective map $H^1_{\Iw}(K_{\cyc}^\eta/K,V) \to H^1_{\Iw}(P_{\cyc}/P,V)$. The space $H^1_{\Iw,g}(K_{\cyc}^\eta/K,V)$ therefore injects into $H^1_{\Iw,g}(P_{\cyc}/P,V)$, and Proposition \ref{prop1} implies that $H^1_{\Iw,g}(P_{\cyc}/P,V)$ is a finite-dimensional $\Qp$-vector space of dimension $\leq \dim(V)$. This now implies the corresponding analogue of Corollary \ref{coro1}, and the rest of the proof is then the same.
\end{proof}

In particular, one can take for $K_\infty$ any extension containing the torsion points of a Lubin--Tate formal group. Indeed, if that group is attached to a uniformizer $\pi$ of a finite extension $F$ of $\Qp$ contained in $K$, and if $\chi_{\pi} : G_F \to \mathcal{O}_F^\times$ is the corresponding character, then $\Nm_{F/\Qp} (\chi_{\pi})$ is of the form $\chi_{\cyc} \cdot \eta$ as above.

\vspace{\baselineskip}

\noindent\textbf{Acknowledgements.} I thank Rustam Steingart for his input, in particular for requesting the twisted cyclotomic case.

\vspace{\baselineskip}

\noindent\textbf{Tool and computational resource disclosure.} ChatGPT was used to improve the style of writing as well as for assistance with the proof of some technical results.

\bibliographystyle{smfalpha}
\bibliography{UnivNorms}

@article {Ber05,
    AUTHOR = {Berger, Laurent},
     TITLE = {Repr\'{e}sentations de de {R}ham et normes universelles},
   JOURNAL = {Bull. Soc. Math. France},
  FJOURNAL = {Bulletin de la Soci\'{e}t\'{e} Math\'{e}matique de France},
    VOLUME = {133},
      YEAR = {2005},
    NUMBER = {4},
     PAGES = {601--618},
      ISSN = {0037-9484,2102-622X},
   MRCLASS = {11R23 (11F80 11S25 14F30)},
  MRNUMBER = {2233697},
MRREVIEWER = {Fr\'{e}d\'{e}ric\ D\'{e}glise},
       DOI = {10.24033/bsmf.2498},
       URL = {https://doi.org/10.24033/bsmf.2498},
}

@article {Ber16,
    AUTHOR = {Berger, Laurent},
     TITLE = {Multivariable {$(\varphi,\Gamma)$}-modules and locally
              analytic vectors},
   JOURNAL = {Duke Math. J.},
  FJOURNAL = {Duke Mathematical Journal},
    VOLUME = {165},
      YEAR = {2016},
    NUMBER = {18},
     PAGES = {3567--3595},
      ISSN = {0012-7094,1547-7398},
   MRCLASS = {14D24 (11F03 11S37)},
  MRNUMBER = {3577371},
MRREVIEWER = {Kimball\ L.\ Martin},
       DOI = {10.1215/00127094-3674441},
       URL = {https://doi.org/10.1215/00127094-3674441},
}

@article {PR00,
    AUTHOR = {Perrin-Riou, Bernadette},
     TITLE = {Repr\'{e}sentations {$p$}-adiques et normes universelles. {I}.
              {L}e cas cristallin},
   JOURNAL = {J. Amer. Math. Soc.},
  FJOURNAL = {Journal of the American Mathematical Society},
    VOLUME = {13},
      YEAR = {2000},
    NUMBER = {3},
     PAGES = {533--551},
      ISSN = {0894-0347,1088-6834},
   MRCLASS = {11S20 (11G25 11R23)},
  MRNUMBER = {1758753},
MRREVIEWER = {Mark\ Kisin},
       DOI = {10.1090/S0894-0347-00-00329-5},
       URL = {https://doi.org/10.1090/S0894-0347-00-00329-5},
}

@article {Pon25,
    AUTHOR = {Ponsinet, Gautier},
     TITLE = {Universal norms and the {F}argues-{F}ontaine curve},
   JOURNAL = {Math. Ann.},
  FJOURNAL = {Mathematische Annalen},
    VOLUME = {392},
      YEAR = {2025},
    NUMBER = {2},
     PAGES = {2853--2912},
      ISSN = {0025-5831,1432-1807},
   MRCLASS = {11R23 (11F80 11F85 14G45)},
  MRNUMBER = {4906335},
MRREVIEWER = {K\^{a}z\i m\ B\"{u}y\"{u}kboduk},
       DOI = {10.1007/s00208-025-03131-8},
       URL = {https://doi.org/10.1007/s00208-025-03131-8},
}

@unpublished{Pon24,
   AUTHOR = {Ponsinet, Gautier},
   TITLE = {{B}loch--{K}ato groups over perfectoid fields and {G}alois theory of $p$-adic periods},
  NOTE = {Preprint},
  YEAR = {2024},}
\end{document}